\documentclass[12pt]{article}
%
\usepackage{amsmath}
\usepackage{amssymb}
\usepackage{amsfonts}
\usepackage{amsthm}
\usepackage{enumerate}
\usepackage{longtable}
\usepackage{hyperref}

%
%

\def\init{\setcounter{equation}{0}}

\newtheorem{theorem}{Theorem}[section]
\newtheorem{proposition}[theorem]{Proposition}

\newtheorem{definition}[theorem]{Definition}

\def\Li{{\operatorname{Li}}}

\def\d{{\operatorname{d}}}

\def\mod{\hbox{mod}\,}

\setlength{\textwidth}{16cm}
\setlength{\textheight}{21cm}
\setlength{\oddsidemargin}{0cm}
\setlength{\topmargin}{0cm}
\addtolength{\footskip}{1cm}
\voffset=-2\baselineskip
\parskip=10pt


\numberwithin{equation}{section}
\newenvironment{acknowledgements}{\noindent{\bf Acknowledgements}\bigskip}{}

\begin{document}

\title{An extension of Boyd's $p$-adic algorithm for the harmonic series}
\author{Mathew D. Rogers\\
        \small{\textit{Department of Mathematics, University of British
        Columbia}}\\
        \small{\textit{Vancouver, BC, V6T-1Z2, Canada}}\\
        \small{\textbf{email:} matrogers@math.ubc.ca}}

\maketitle

\abstract{In this paper we will extend a $p$-adic algorithm of Boyd
in order to study the size of the set:
\[J_p(y)=\left\{n :\sum_{j=1}^{n}\frac{y^j}{j}\equiv 0\left(\mod
p\right)\right\}.\]  Suppose that $p$ is one of the first $100$ odd
primes and $y\in\{1,2,\dots,p-1\}$, then our calculations prove that
$|J_p(y)|<\infty$ in $24240$ out of $24578$ possible cases. Among
other results we show that $|J_{13}(9)|=18763$. The paper concludes
by discussing some possible applications of our method to sums
involving Fibonacci numbers.}


\section{Introduction}
\label{intro} \init

    The goal of this work is to extend an algorithm from David Boyd's paper,
``A $p$-adic study of the partial sums of the harmonic series". In
particular, Boyd developed an algorithm which enabled him to
calculate complete solution sets of the congruence
\begin{equation}\label{boyds initial equation}
\sum_{j=1}^{n}\frac{1}{j}\equiv 0\left(\mod p\right),
\end{equation}
for $497$ of the first $500$ primes (his calculations did not finish
for $p\in\{83,127,397\}$, and he accidentally omitted $p=509$ from
his table). Boyd's computations strongly supported the hypothesis he
drew from probabilistic models, namely that Eq. \eqref{boyds initial
equation} should only have a finite number of solutions in $n$ for
every prime $p$ \cite{Bo}.

In this paper we will consider the much larger class of sums defined
by
\begin{equation}\label{sum yj/j =0 mod p}
G_n(y):=\sum_{j=1}^{n}\frac{y^j}{j}.
\end{equation}
We will extend Boyd's computational method to find complete
solutions sets of the congruence $G_n(y)\equiv 0 \left(\mod
p\right)$ for every prime $p<550$, and for every $p$-adic integer
$y$ with $\nu_p(y)=0$, with $338$ exceptions where our calculations
did not finish. Our computations rely upon results derived in
Theorems \ref{recurrence for Gn(y)} and \ref{Gn(y^p) congruence
theorem}. Table $1$ lists the number of solutions of the congruence
$G_n(y)\equiv 0\left(\mod p\right)$ for $p<50$ and
$y\in\{1,2,\dots,p-1\}$.

This topic is also closely related to several unsolved problems in
classical number theory.  Boyd mentioned the connection between Eq.
\eqref{boyds initial equation}, Bernoulli numbers, and regular
primes. It turns out that the zeros of $G_n(y)\left(\mod p\right)$
are also related to the Wieferich primes.  The elementary congruence
$G_{p-1}(2)\equiv \frac{2^p-2}{p} \left(\mod p\right)$, implies that
$G_{p-1}(2)\equiv 0\left(\mod p\right)$ holds if and only if
$2^{p-1}\equiv 1\left(\mod p^2\right)$.  Primes that satisfy
$2^{p-1}\equiv 1\left(\mod p^2\right)$ are called Weiferich primes,
and the only known examples are $p=1093$ and $p=3511$ (see \cite{Es}
or \cite{Ri1}).  Despite the fact that just two Weiferich primes
have been discovered, the proof that infinitely
\textit{non}-Wieferich primes exist depends upon the ABC conjecture.
Thus it would be extremely interesting to find an unconditional
proof that ``$|\{n:G_n(2)\equiv 0 \left(\mod p\right)\}|=0$" holds
infinitely often, since this condition implies that $p$ is not a
Wieferich prime.  Our computations (and heuristics) seem to suggest
that approximately $36 \%$ of primes satisfy this last condition. In
general we believe that $|\{n:G_n(y)\equiv 0\left(\mod
p\right)|<\infty$ whenever $\nu_p(y)=0$, and our calculations have
confirmed that this is true in at least $24240$ out of $24578$ cases
for $p<550$.

Finally, we will conclude the paper with a brief discussion of some
further possible extensions of our method.  For example, we will
show that Boyd's algorithm can be applied to study the congruence
\begin{equation}\label{sum fibonaccij/j =0 mod p}
\sum_{j=1}^{n}\frac{F_j}{j}\equiv 0 \left(\mod p\right),
\end{equation}
when the $F_j$'s are Fibonacci numbers.  It seems likely that a
large number of interesting congruences similar to Eq. \eqref{sum
fibonaccij/j =0 mod p} can also be studied using our method.

\section{Elementary properties of $J_N(y)$ and $G_n(y)$}
\label{algorithm} \init

Although the primary goal of this paper is to determine complete
solution sets of the congruence $G_n(y)\equiv 0\left(\mod p\right)$,
we can easily consider the more general case of $J_N(y)$ when
$N\in\mathbb{Z}$.
\begin{definition}
Let $J_N(y)$ be defined by
\begin{equation}
J_N(y)=\left\{n: G_n(y)\equiv 0\left(\mod N\right)\right\}.
\end{equation}
\end{definition}
Before performing any calculations we will use elementary number
theory to narrow the scope of our investigation.  First notice
that $G_n(y)\equiv 0\left(\mod N\right)$ if and only if
$G_n(y)\equiv 0\left(\mod p^{s}\right)$ for every prime power
$p^s$ dividing $N$. Therefore it is obvious that
\begin{equation}\label{J intersection equation}
J_{p_1^{s_1}\dots p_n^{s_n}}(y)=\bigcap_{i=1}^{n}J_{p_i^{s_i}}(y),
\end{equation}
whenever the $p_i$'s are distinct primes.  Likewise, it is clear
that for any prime $p$ we must have a sequence of inclusions:
\begin{equation*}
J_p(y)\supseteq J_{p^2}(y)\supseteq J_{p^3}(y)\dots
\end{equation*}
In Section \ref{computations} we will show that easiest way to
calculate $J_{p^{s}}(y)$ is to first determine $J_p(y)$, and then to
check whether or not $G_n\left(y\right)\equiv0 \left(\mod
p^s\right)$ for every $n\in J_p(y)$.  Surprisingly, it is often more
difficult to determine $J_{p^s}(y)$ than $J_p(y)$.

Now we will discuss which values of $y\in\mathbb{Q}$ need to be
considered. Elementary number theory shows that calculating
$J_{p^s}(y)$ is easy if $\nu_p(y)>0$ because almost all of the
terms in $G_n(y)$ vanish modulo $p^s$.  Likewise, it is clear that
$J_{p^s}(y)=\emptyset$ if $\nu_p(y)<0$.  Finally, when
$\nu_p(y)=0$ we can appeal to the following proposition:

\begin{proposition}\label{Gn(y) and J_N(y) reduction mod p^s theorem} Suppose that $\nu_p(y)=0$.  If
$y\equiv \bar{y}\left(\mod p^{s}\right)$, then
\begin{equation}\label{Gn(y) general prime power congruence}
G_n(y)\equiv G_n(\bar{y})\left(\mod p^s\right),
\end{equation}
and it follows that
\begin{equation}\label{Jp^alpha equality}
J_{p^{s}}(y)=J_{p^{s}}(\bar{y}).
\end{equation}
\end{proposition}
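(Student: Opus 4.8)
The plan is to prove the congruence \eqref{Gn(y) general prime power congruence} directly from the definition of $G_n$, and then to read off \eqref{Jp^alpha equality} as an immediate formal consequence. Writing the difference of partial sums as
\[
G_n(y)-G_n(\bar{y})=\sum_{j=1}^{n}\frac{y^j-\bar{y}^j}{j},
\]
I would try to show that every summand has $p$-adic valuation at least $s$, so that the same lower bound holds for the whole sum. Observe first that, since $s\geq 1$ and $y\equiv\bar{y}\left(\mod p\right)$, the hypothesis $\nu_p(y)=0$ forces $\nu_p(\bar{y})=0$ as well; hence $y$, $\bar{y}$, and all their powers are $p$-adic units.

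The subtle point, and the step I expect to be the main obstacle, is the division by $j$ when $p\mid j$. Factoring $y^j-\bar{y}^j=(y-\bar{y})\sum_{i=0}^{j-1}y^i\bar{y}^{\,j-1-i}$ and using only $\nu_p(y-\bar{y})\geq s$ yields the naive bound $\nu_p(y^j-\bar{y}^j)\geq s$; after dividing by $j$ this degrades to $s-\nu_p(j)$, which is useless once $p\mid j$. To repair this I would prove the sharper estimate
\[
\nu_p\!\left(y^j-\bar{y}^j\right)\geq s+\nu_p(j),
\]
whose extra factor of $p^{\nu_p(j)}$ exactly cancels the denominator.

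The key lemma behind this estimate is that $a\equiv b\left(\mod p^{s}\right)$ implies $a^p\equiv b^p\left(\mod p^{s+1}\right)$ for $s\geq 1$: expanding $(b+p^s c)^p$ by the binomial theorem, the linear term contributes $p^{s+1}b^{p-1}c$, while every higher term has valuation at least $2s\geq s+1$. Iterating this $\nu_p(j)$ times raises $y$ and $\bar{y}$ to the power $p^{\nu_p(j)}$ with modulus $p^{s+\nu_p(j)}$, and raising the result to the remaining prime-to-$p$ part of $j$ preserves the congruence (again by the factorization of $x^m-z^m$). This gives the displayed valuation bound, hence $\nu_p\big((y^j-\bar{y}^j)/j\big)\geq s$ for every $j$, and therefore $\nu_p\big(G_n(y)-G_n(\bar{y})\big)\geq s$, which is precisely \eqref{Gn(y) general prime power congruence}.

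Finally, \eqref{Jp^alpha equality} follows with no further work. For each $n$, the congruence just established shows that $G_n(y)\equiv 0\left(\mod p^s\right)$ holds if and only if $G_n(\bar{y})\equiv 0\left(\mod p^s\right)$; hence the solution sets $J_{p^{s}}(y)$ and $J_{p^{s}}(\bar{y})$ have exactly the same elements and coincide.
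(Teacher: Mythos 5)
Your proof is correct and follows essentially the same route as the paper's: both reduce to showing each summand $(y^j-\bar{y}^j)/j$ has valuation at least $s$, and both establish this via the key fact that raising a congruence mod $p^s$ to the $p$-th power lifts it to mod $p^{s+1}$ (your iterated lemma is exactly the paper's induction on $\gamma$ where $j=j'p^{\gamma}$, the only cosmetic difference being that the paper first normalizes by dividing through by $\bar{y}^j$ to work with $(1+\beta p^s)^{p^{\gamma}j'}-1$). No gaps; the deduction of the equality of the sets $J_{p^s}(y)=J_{p^s}(\bar{y})$ is immediate in both treatments.
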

\begin{proof} Notice that Eq. \eqref{Gn(y) general prime power congruence}
is equivalent to the congruence
\begin{equation*}
\sum_{j=1}^{n}\frac{\bar{y}^{j}-y^{j}}{j}\equiv 0\left(\mod
p^{s}\right),
\end{equation*}
which follows trivially from the claim that for any $j\ge 1$
\begin{equation*}
\frac{\bar{y}^{j}-y^{j}}{j}\equiv 0\left(\mod p^{s}\right).
\end{equation*}
 To prove this claim, suppose $j=j'p^\gamma$
where $(j',p)=1$. Since $\nu_p(y)=0$, rearrangement shows that
\begin{equation*}
    \left(y/\bar{y}\right)^{p^{\gamma}j'}-1\equiv
    0\left(\mod p^{\gamma+s}\right).
\end{equation*}
Since $y/\bar{y}\equiv1\left(\mod p^s\right)$, there exists a
$p$-adic integer $\beta$ such that $y/\bar{y}=1+\beta p^{s}$, and
therefore
\begin{equation*}
    \left(1+p^{s}\beta\right)^{p^{\gamma}j'}-1\equiv
    0\left(\mod p^{\gamma+s}\right).
\end{equation*}
Induction on $\gamma$ verifies this last equality.$\blacksquare$
\end{proof}

It follows immediately from Proposition \ref{Gn(y) and J_N(y)
reduction mod p^s theorem} that $
J_p(y)\in\left\{J_p(1),J_p(2),\dots,J_p(p-1)\right\}$ whenever
$\nu_p(y)=0$.  Recall that Boyd was able to calculate $|J_p(1)|$ for
$p<550$ because the set $J_p(1)$ possesses a tree structure. In
particular, an integer $n$ can only belong to $J_p(1)$ if the
integer part of $n/p$ also belongs to $J_p(1)$. If the harmonic
series is defined by
\begin{equation}
H_n:=G_n(1)=\sum_{j=1}^n\frac{1}{j},
\end{equation}
then for $k\in\{0,1,\dots,p-1\}$ and $n\ge 0$
\begin{equation}\label{harmonic series recursion}
 H_{pn+k}\equiv\frac{H_n}{p}+H_k \left(\mod p\right).
\end{equation}
It follows from Eq. \eqref{harmonic series recursion} that $H_{p
n+k}\equiv 0 \left(\mod p\right)$ can only hold if $H_n\equiv
0\left(\mod p\right)$ holds as well, thus the set $J_p(1)$ inherits
its tree structure from Eq. \eqref{harmonic series recursion}.  Boyd
calculated $J_p(1)$ using the following algorithm:

\medskip
\noindent\textbf{Boyd's Algorithm:}

\begin{center}
{\small \framebox[6in]{
\begin{minipage}[t]{5.0in}
      First check if $H_{k}\equiv 0\left(\mod p\right)$ for
every $k\in\{1,\dots,p-1\}$. For each $k$ that satisfies
$H_k\equiv0\left(\mod p\right)$, check if any elements of $\{H_{p
k}, H_{p k+1}, H_{p k+p-1}\}$ also vanish modulo $p$. Iterate this
argument whenever $H_{pk+j}\equiv 0\left(\mod p\right)$ for some
$j\in\{0,1,\dots,p-1\}$.  The algorithm terminates in a finite
amount of time if $|J_p(1)|<\infty$.  
\end{minipage}
} } \end{center}

    In order to apply Boyd's algorithm to the problem of calculating $J_p(y)$, we will
first need to prove that the set $J_p(y)$ has a tree structure for
$y\in\{2,3,\dots p-1\}$.  In particular the the next theorem
proves that $G_n(y)$ satisfies a recurrence relation which reduces
to Eq. \eqref{harmonic series recursion} whenever $y=1$.

\begin{theorem}\label{recurrence for Gn(y)} Suppose that $\nu_p(y)=0$
and $k\in\{0,1,\dots,p-1\}$, then
\begin{equation}\label{Gn recurrence}
G_{pn+k}\left(y^p\right)\equiv\frac{G_{n}\left(y^p\right)}{p}+y^n
G_k(y)+\left(\frac{y^n-1}{y-1}\right)G_{p-1}(y)\left(\mod p\right).
\end{equation}
\end{theorem}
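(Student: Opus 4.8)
The plan is to prove the recurrence by splitting the sum $G_{pn+k}(y^p)$ according to the residue class of the summation index $j$ modulo $p$. Let me think through how the structure would naturally emerge.
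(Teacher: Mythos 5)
Your proposal stops at the announcement of a strategy and never carries it out, so there is no proof to evaluate --- only the first sentence of one. The opening move you describe (splitting $G_{pn+k}(y^p)=\sum_{j=1}^{pn+k} y^{pj}/j$ by the residue class of $j$ modulo $p$) is indeed the same first step the paper takes: the terms with $p\mid j$ collect into $\frac{1}{p}\sum_{j=1}^{n} y^{p^2 j}/j$, the terms with $(j,p)=1$ and $j\le pn$ form a double sum over residues $1,\dots,p-1$ and blocks $r=0,\dots,n-1$, and the tail $pn<j\le pn+k$ gives the $y^n G_k(y)$ term after reducing $1/(j+pn)\equiv 1/j \pmod p$ and using Fermat's little theorem. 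But none of this is in your write-up, and in particular you never confront the one genuinely delicate point of the argument.

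That point is this: the $p\mid j$ portion of the split is $\frac{G_n(y^{p^2})}{p}$, \emph{not} $\frac{G_n(y^p)}{p}$ as the statement requires. Because of the $1/p$ in front, you cannot simply reduce the exponent modulo $p$; you need the stronger congruence $G_n(y^{p^2})\equiv G_n(y^p) \pmod{p^2}$, which is exactly what Proposition \ref{Gn(y) and J_N(y) reduction mod p^s theorem} (Eq.~\eqref{Gn(y) general prime power congruence} with $s=2$, since $y^{p^2}\equiv y^p \pmod{p^2}$) supplies. Only then does $\bigl(G_n(y^{p^2})-G_n(y^p)\bigr)/p\equiv 0\pmod p$, letting you swap $G_n(y^{p^2})/p$ for $G_n(y^p)/p$ in the final congruence. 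A complete proof must include this step; without it the "plan" as stated would terminate with the wrong term on the right-hand side. Please write out the full computation, including the geometric-series evaluation $\sum_{r=0}^{n-1} y^r = (y^n-1)/(y-1)$ that produces the coefficient of $G_{p-1}(y)$.
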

\begin{proof}
    First observe that for $k\in\{0,1,\dots,p-1\}$
\begin{align*}
G_{pn+k}\left(y^{p}\right)
&=\sum_{j=1}^{n}\frac{y^{p^2 j}}{p
j}+\sum_{j=1}^{p-1}\sum_{r=0}^{n-1}\frac{y^{p(j+pr)}}{j+pr}+\sum_{j=1}^{k}\frac{y^{p(j+pn)}}{j+pn}.
\end{align*}
Reducing modulo $p$, this becomes
\begin{align*}
G_{p n+k}\left(y^{p}\right)
&\equiv\frac{G_{n}\left(y^{p^{2}}\right)}{p}+\left(\frac{y^n-1}{y-1}\right)G_{p-1}(y)+y^n
G_k(y)\left(\mod
p\right)\notag\\
&\equiv\frac{G_{n}\left(y^{p}\right)}{p}+\frac{G_{n}\left(y^{p^{2}}\right)-G_{n}\left(y^{p}\right)}{p}+\left(\frac{y^n-1}{y-1}\right)G_{p-1}(y)+y^n
G_k(y)\left(\mod p\right)\label{Gn intermed congruence}.
\end{align*}
Since Eq. \eqref{Gn(y) general prime power congruence} shows that
$G_{n}\left(y^{p^{2}}\right)- G_{n}\left(y^{p}\right)\equiv
0\left(\mod p^2\right)$, it is easy to check that $
\left(G_{n}\left(y^{p^{2}}\right)-
G_{n}\left(y^{p}\right)\right)/p\equiv 0\left(\mod p\right)$, and
the theorem follows.$\blacksquare$
\end{proof}

    Although Eq. \eqref{Gn recurrence} does not apply
to $G_n(y)$ directly, we can still use it to calculate all of the
integers belonging to $J_p(y)$. In particular, Eq. \eqref{Gn(y)
general prime power congruence} shows that whenever $\nu_p(y)=0$:
\begin{equation}\label{Jp(y^p)=Jp(y)}
J_p\left(y^p\right)=J_p(y).
\end{equation}
Since Eq. \eqref{Gn recurrence} shows that $J_p\left(y^p\right)$
has a tree structure, we can calculate $J_p\left(y^p\right)$ by
simply modifying Boyd's algorithm to replace $H_n$ with
$G_n\left(y^p\right)$.

\subsection{A $p$-adic expansion for $G_{p
n}\left(y^p\right)-\frac{G_{n}\left(y^p\right)}{p}$} \label{p adic
series section} \init


Although in principle $J_p(y)$ can be calculated by combining Boyd's
algorithm with equations \eqref{recurrence for Gn(y)} and
\eqref{Jp(y^p)=Jp(y)}, such a naive approach will rapidly exhaust a
computer's memory. Boyd encountered a similar problem since both the
numerator and denominator of $H_n$ grow exponentially as functions
of $n$. In order to calculate $H_n$ efficiently, he used a $p$-adic
series for $H_{p n}-H_n/p$.  Boyd showed that for $s\ge 2$ there
exists a polynomial, $X(n)$, of degree $s-1$ such that
\begin{equation*}
H_{p n}-\frac{H_n}{p}\equiv X(n)\left(\mod p^s\right).
\end{equation*}
Notice that the right-hand side of this last congruence is easy to
calculate even for extraordinarily large values of $n$. Thus if
$H_n\left(\mod p^{s+1}\right)$ is known, the value of $H_{p
n}\left(\mod p^s\right)$ follows easily from rearranging the
congruence to obtain
\begin{align*}
H_{p n}\equiv& X(n)+\frac{H_n}{p}\left(\mod p^s\right).
\end{align*}
This argument can be iterated $s-1$ times.  For example, it is
clear that
\begin{equation*}
\begin{split}
H_{p^2 n}&\equiv X(p n)+\frac{H_{p n}}{p}\left(\mod
p^{s-1}\right)\\
&\equiv X(p n)+\frac{H_{p n}\left(\mod p^s\right)}{p}\left(\mod
p^{s-1}\right)\\
&\equiv X(p n)+\frac{X(n)}{p}+\frac{H_{n}}{p^2}\left(\mod
p^{s-1}\right)
\end{split}
\end{equation*}
Although every iteration of this argument causes the loss of one
digit of $p$-adic precision, we can still compute the value of
$H_n\left(\mod p\right)$ for every integer $n\in J_p(1)$ by simply
starting with a large enough initial value of $s$ (assuming of
course that $|J_p(1)|<\infty$).

Therefore it is obvious that we will need to derive a $p$-adic
expansion for $G_{pn}\left(y^p\right)-G_n\left(y^p\right)/p$. Recall
that Boyd determined his $p$-adic expansion for $H_{p n}-H_{n}/p$ by
first calculating the value of the function for $n\in\{1,\dots s\}$,
and then by multiplying those values times the inverse of an
$s$-dimensional Vandermonde matrix. Unfortunately Boyd's approach
usually fails in this case, as the numerator of $G_{p
n}\left(y^p\right)-G_{n}\left(y^p\right)/p$ grows far too quickly to
allow for direct calculation. As a result, we will avoid those
computations altogether by finding explicit formulas for the
$p$-adic series coefficients. The first step is to split the
function into two pieces:
\begin{equation}\label{split up the gn(y^p) sum}
G_{p
n}\left(y^p\right)-\frac{G_{n}\left(y^p\right)}{p}=\sum_{\substack{j=1\\
(j,p)=1}}^{p n}\frac{y^{p j}}{j}+\sum_{j=1}^{n}\frac{y^{p^2
j}-y^{p j}}{p j}.
\end{equation}
In the next theorem we prove two useful formulas for calculating Eq.
\eqref{split up the gn(y^p) sum}.

\begin{theorem}\label{Gn(y^p) congruence theorem}
Suppose that $p>2$, $\nu_p(y)=0$, and $y\not= 1$, then we have the
following expansions:
\begin{align}
\sum_{\substack{j=1\\ (j,p)=1}}^{pn}\frac{y^{p j}}{j}\equiv &
A_{0}(s)-y^{p^2 n}\sum_{i=0}^{s-1} A_i(s)n^{i} \left(\mod
p^s\right),\label{Gn(y^p) relatively prime part}\\
\sum_{j=1}^{n}\frac{y^{p^2 j}-y^{p j}}{p j}\equiv & N_0(s)-y^{p
n}\sum_{i=0}^{s-1}N_i(s)n^i\left(\mod p^s\right).\label{Gn(y^p)
not relatively prime part}
\end{align}
We can calculate $A_i(s)$ and $N_i(s)$ using
\begin{align}
A_{i}(s)\equiv & \sum_{m=i}^{s-1}(-1)^m p^m{m\choose
i}\left(\sum_{k=1}^{p-1}\frac{y^{p
k}}{k^{m+1}}\right)\left(\sum_{k=0}^{m-i}k!\mathfrak{S}_{m-i}^{(k)}\frac{y^{
p^2 k}}{\left(1-y^{p^2}\right)^{k+1}}\right)\left(\mod
p^s\right),\label{Ai(s) definition}\\
N_i(s)\equiv&-p^{2i+1}\frac{z^{i+1}}{(i+1)!}\notag\\
&+\sum_{m=i}^{s-2}p^{2m+1}\frac{ z^{m+1}}{(m+1)!}{m\choose
i}\left(\sum_{k=0}^{m-i}k!\mathfrak{S}_{m-i}^{(k)}\frac{y^{p
k}}{\left(1-y^p\right)^{k+1}}\right)\left(\mod
p^s\right),\label{Ni(s) definition}
\end{align}
where $z$ is a $p$-adic integer defined by
\begin{equation}\label{z def}
z\equiv\frac{1}{p^2}\sum_{j=1}^{s-1}\frac{(-1)^{j+1}}{j}\left(y^{p(p-1)}-1\right)^j\left(\mod
p^s\right),
\end{equation}
and the $\mathfrak{S}_{m}^{(k)}$'s are Stirling numbers of the
second kind \cite{Gr}.
\end{theorem}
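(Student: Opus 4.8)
The plan is to treat the two sums in \eqref{split up the gn(y^p) sum} separately, reducing each to an inner sum of the shape $\sum_r r^m x^r$ that is then evaluated in closed form by the Euler operator $\theta=x\,\frac{\d}{\d x}$. Throughout I write $x=y^{p^2}$ for the first sum and $w=y^p$ for the second. The point of the hypotheses $\nu_p(y)=0$ and $y\ne1$ is that (after the residue reduction of Proposition~\ref{Gn(y) and J_N(y) reduction mod p^s theorem}, so that one may take $y\in\{2,\dots,p-1\}$) both $1-x$ and $1-w$ are $p$-adic units, which is exactly what makes the rational functions below well defined.

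For the first sum I would reindex: each $j$ with $(j,p)=1$ and $1\le j\le pn$ is uniquely $j=k+pr$ with $1\le k\le p-1$, $0\le r\le n-1$. Expanding the $p$-adic geometric series
\[ \frac{1}{k+pr}\equiv\sum_{m=0}^{s-1}\frac{(-1)^m p^m r^m}{k^{m+1}}\left(\mod p^s\right) \]
(legitimate since $k$ is a unit and the terms with $m\ge s$ carry a factor $p^s$) and separating the $k$- and $r$-dependence gives
\[ \sum_{\substack{j=1\\(j,p)=1}}^{pn}\frac{y^{pj}}{j}\equiv\sum_{m=0}^{s-1}(-1)^m p^m\Big(\sum_{k=1}^{p-1}\frac{y^{pk}}{k^{m+1}}\Big)\Big(\sum_{r=0}^{n-1}r^m x^r\Big)\left(\mod p^s\right). \]
The heart of the matter is a closed form for the inner sum. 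I would prove, as an identity of rational functions, $\sum_{r=0}^{n-1}r^m x^r=\theta^m\frac{1-x^n}{1-x}$, split $\frac{1-x^n}{1-x}=\frac1{1-x}-x^n\frac1{1-x}$, and invoke the operator identities $\theta^m(x^n g)=x^n\sum_{i=0}^m\binom{m}{i}n^i\,\theta^{m-i}g$ and
\[ \theta^{j}\frac1{1-x}=\sum_{k=0}^{j}k!\,\mathfrak{S}_{j}^{(k)}\frac{x^{k}}{(1-x)^{k+1}}, \]
the latter by induction on $j$ from the Stirling recurrence $\mathfrak{S}_{j+1}^{(k)}=k\,\mathfrak{S}_{j}^{(k)}+\mathfrak{S}_{j}^{(k-1)}$. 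Setting $x=y^{p^2}$ and reading off the constant term and the coefficient of $-y^{p^2n}n^i$ reproduces \eqref{Ai(s) definition} exactly, with $A_0(s)$ the $i=0$ instance.

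For the second sum I would set $w=y^p$ and $u=y^{p(p-1)}-1$, observe $\nu_p(u)\ge2$ (so by \eqref{z def} the quantity $z\equiv p^{-2}\log(1+u)$ is a $p$-adic integer and $\log(1+u)=p^2z$), and write $y^{p^2j}-y^{pj}=w^{j}\big((1+u)^j-1\big)$. Since $\nu_p(u)\ge2$, the identity $(1+u)^j=\exp\!\big(j\log(1+u)\big)=\exp(jp^2z)$ is valid, and expanding the exponential gives
\[ \frac{y^{p^2j}-y^{pj}}{pj}=w^{j}\sum_{m\ge0}\frac{p^{2m+1}z^{m+1}}{(m+1)!}\,j^{m}. \]
After interchanging the summations the problem again reduces to $\sum_{j=1}^{n}j^m w^{j}=\theta_w^m\frac{w}{1-w}$, evaluated by the same two operator identities. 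The one new feature is that the sum starts at $j=1$: factoring $\frac{w}{1-w}=\frac1{1-w}-1$, the extra ``$-1$'' survives the operator calculation only in the $m=i$ term, and this is precisely what produces the isolated summand $-p^{2i+1}z^{i+1}/(i+1)!$ in \eqref{Ni(s) definition}, while the rest yields the $\theta_w^{m-i}\frac1{1-w}$ factor expanded by the Stirling identity. Collecting the coefficient of $-y^{pn}n^i$ then gives \eqref{Ni(s) definition}, with $N_0(s)$ again the $i=0$ case.

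The main obstacle will not be the algebra but the $p$-adic bookkeeping that makes it rigorous: justifying the interchange of summations, and verifying that truncating the geometric and exponential series at the stated ranges ($m\le s-1$ in \eqref{Gn(y^p) relatively prime part}, $m\le s-2$ in \eqref{Gn(y^p) not relatively prime part}) genuinely captures each sum modulo $p^s$, via $\nu_p(p^m)\ge m$ and $\nu_p(p^{2m+1})\ge 2m+1$. The most delicate point is integrality of the displayed coefficients: the factors $(1-y^{p^2})^{-(k+1)}$ and $(1-y^p)^{-(k+1)}$ are harmless units exactly when $y\not\equiv1\left(\mod p\right)$, which is what the hypothesis $y\ne1$ furnishes once $y$ is replaced by its residue; in the borderline case $y\equiv1\left(\mod p\right)$ one would need the power-sum congruences $\sum_{k=1}^{p-1}k^{-(m+1)}\equiv0\left(\mod p\right)$ to restore integrality, and I would handle that case separately if required.
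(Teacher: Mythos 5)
Your proposal is correct, and for the first expansion \eqref{Gn(y^p) relatively prime part} it coincides with the paper's proof: the same reindexing $j=k+pr$, the same truncated geometric expansion of $1/(k+pr)$, and the same closed form for $\sum_r r^m x^r$ via the Stirling-number formula for $\Li_{-r}(x)$ (the paper packages your operator identities as equations \eqref{summation for finite polylog} and \eqref{truncated polylog summation formula} in the appendix, including the $-\delta_{r0}$ term that you correctly track as the source of the cancellation in $A_i(s)$ and of the isolated summand in $N_i(s)$). For the second expansion your route is genuinely shorter: you invoke the $p$-adic exp--log identity $(1+u)^j=\exp(jp^2z)$ with $z=p^{-2}\log(1+u)$ directly, whereas the paper expands $(1+\alpha p^2)^j-1$ by the binomial theorem, converts the falling factorials into powers of $j$ via Stirling numbers of the \emph{first} kind, and then resums the coefficients with the generating function $\frac{1}{m!}(\log(1+x))^m=\sum_k S_k^{(m)}x^k/k!$ --- in effect a hands-on, term-by-term proof of your exponential identity that carries its own explicit truncation bounds ($\nu_p(\alpha^kp^{2k-1}/k!)\ge k$ and the estimate $\nu_p(p^{s+m}/m!)>s$). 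Your version buys brevity; the paper's buys a self-contained error analysis, which you would still need to supply when truncating $\exp$ and $\log$ at $m\le s-2$ (here $\nu_p(u)\ge2>1/(p-1)$ makes everything converge for $p>2$). One caution about your closing remark: the hypothesis $y\ne1$ does \emph{not} let you assume $y\not\equiv1\ (\mathrm{mod}\ p)$, and the paper explicitly applies this theorem with $y=1+p$; what the derivation actually needs is only $y^{p}\ne1$ and $y^{p^2}\ne1$ so that the rational functions are defined (automatic for a unit $y\ne1$ in $\mathbb{Z}_p$), the congruences then being statements about valuations of differences rather than about integrality of the individual coefficients, so no separate case is required.
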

\begin{proof} We will prove equations \eqref{Gn(y^p) relatively prime part} and \eqref{Ai(s) definition} first.
Rearranging the sum shows that
\begin{equation*}
\sum_{\substack{j=1\\ (j,p)=1}}^{pn}\frac{y^{p
j}}{j}=\sum_{k=1}^{p-1}\frac{y^{pk}}{k}+\sum_{k=1}^{p-1}\sum_{r=1}^{n-1}\frac{y^{p(k+pr)}}{k+p
r}.
\end{equation*}
Reducing modulo $p^s$ this becomes
\begin{equation*}
\begin{split}
\sum_{\substack{j=1\\ (j,p)=1}}^{pn}\frac{y^{p j}}{j}
&\equiv\sum_{k=1}^{p-1}\frac{y^{pk}}{k}+\sum_{k=1}^{p-1}\frac{y^{p
k}}{k}\sum_{r=1}^{n-1}y^{p^2
r}\left(\frac{1-\left(\frac{-pr}{k}\right)^s}{1-\left(\frac{-p
 r}{k}\right)}\right)\left(\mod p^s\right).
\end{split}
\end{equation*}
Employing a geometric series yields
\begin{equation}\label{Gn(y^P) relatively prime part intermed}
\begin{split}
\sum_{\substack{j=1\\ (j,p)=1}}^{pn}\frac{y^{p j}}{j}
&\equiv\left(\sum_{k=1}^{p-1}\frac{y^{pk}}{k}\right)+\sum_{m=0}^{s-1}(-1)^m
p^m \left(\sum_{k=1}^{p-1}\frac{y^{p
k}}{k^{m+1}}\right)\left(\sum_{r=1}^{n-1}r^m y^{p^2
r}\right)\left(\mod p^s\right).
\end{split}
\end{equation}
Now we will assume that $y\not=1$, then by Eq. \eqref{truncated
polylog summation formula}
\begin{equation*}
\sum_{r=1}^{n-1}r^m y^{p^2 r}=-n^m y^{p^2
n}+\Li_{-m}\left(y^{p^2}\right)-y^{p^2 n}\sum_{j=0}^{m}{m\choose
j}n^j \Li_{-(m-j)}\left(y^{p^2}\right).
\end{equation*}
Substituting this result into Eq. \eqref{Gn(y^P) relatively prime
part intermed} yields
\begin{equation*}
\sum_{\substack{j=1\\ (j,p)=1}}^{pn}\frac{y^{p j}}{j}\equiv
A_0(s)-y^{p^2 n}\sum_{i=0}^{s-1}A_i(s)n^i\left(\mod p^s\right),
\end{equation*}
where
\begin{equation*}
A_i(s)=(-1)^i p^i\left(\sum_{k=1}^{p-1}\frac{y^{p
k}}{k^{i+1}}\right)+\sum_{m=i}^{s-1}(-1)^m p^m{m\choose
i}\left(\sum_{k=1}^{p-1}\frac{y^{pk}}{k^{m+1}}\right)\Li_{-(m-i)}\left(y^{p^2}\right).
\end{equation*}
Eq. \eqref{Ai(s) definition} follows from combining this
definition of $A_i(s)$ with Eq. \eqref{summation for finite
polylog}.

Now we will prove equations \eqref{Gn(y^p) not relatively prime
part} and \eqref{Ni(s) definition}.  First observe that if
$\alpha=\left(y^{p(p-1)}-1\right)/p^2$, then
\begin{equation*}
\begin{split}
\sum_{j=1}^{n}\frac{y^{p^2 j}-y^{p j}}{p j}
&=\sum_{j=1}^{n}\frac{y^{p j}}{p j}\left(\left(1+\alpha
p^2\right)^{j}-1\right).
\end{split}
\end{equation*}
Applying the binomial formula shows that
\begin{align*}
\sum_{j=1}^{n}\frac{y^{p^2 j}-y^{p j}}{p j}
&=\sum_{j=1}^{n}\frac{y^{p j}}{p j}\sum_{k=1}^{j}{j\choose
k}\alpha^k p^{2k}\notag\\
&=\sum_{k=1}^{n}\frac{\alpha^k
p^{2k-1}}{k!}\sum_{j=1}^{n}\frac{y^{p
j}}{j}\left(j(j-1)\dots(j-k+1)\right).
\end{align*}
Reducing this last equation modulo $p^s$ is easy. The nested sum
is an integer, and since $\nu_p(y)=0$ elementary number theory
shows that $\nu_p(\frac{\alpha^k p^{2k-1}}{k!})\ge k$ for all $k$.
Therefore we can truncate the right-hand sum after the first $s-1$
terms to obtain
\begin{align}\label{first sum intermediate}
\sum_{j=1}^{n}\frac{y^{p^2 j}-y^{p j}}{p j}
\equiv\sum_{k=1}^{s-1}\frac{\alpha^k
p^{2k-1}}{k!}\sum_{j=1}^{n}\frac{y^{p
j}}{j}\left(j(j-1)\dots(j-k+1)\right)\left(\mod p^s\right).
\end{align}
We will simplify Eq. \eqref{first sum intermediate} by using
properties of the Stirling numbers of the first kind.  Recall that
for $k\ge 1$ the Stirling numbers of the first kind have the
generating function
\begin{equation*}
x(x-1)\dots(x-k+1)=\sum_{m=1}^{k}S_k^{(m)}x^m.
\end{equation*}
Substituting this definition into the nested sum in Eq.
\eqref{first sum intermediate} yields
\begin{equation*}
\begin{split}
\sum_{j=1}^{n}\frac{y^{p j}}{j}\left(j(j-1)\dots(j-k+1)\right)
=\sum_{j=1}^{n}\frac{y^{p j}}{j}\sum_{m=1}^{k}S_k^{(m)}j^m
=\sum_{m=0}^{k-1}S_k^{(m+1)}\sum_{j=1}^{n}j^{m}y^{p j},\\
\end{split}
\end{equation*}
and therefore Eq. \eqref{first sum intermediate} becomes
\begin{equation}\label{first sum intermediate part two}
\sum_{j=1}^{n}\frac{y^{p^2 j}-y^{p j}}{p j}\equiv
\sum_{m=0}^{s-2}\left(\sum_{k=m+1}^{s-1}S_k^{(m+1)}\frac{\alpha^k
p^{2k-1}}{k!}\right)\left(\sum_{j=1}^{n}j^m y^{p
j}\right)\left(\mod p^s\right).
\end{equation}

    Now we will use a second power series identity
for Stirling numbers.  It is well known \cite{Gr} that if $|x|<1$
\begin{equation*}
\frac{1}{m!}\left(\sum_{k=1}^{\infty}\frac{(-1)^{k+1}}{k}x^k\right)^m=\sum_{k=m}^{\infty}S_k^{(m)}\frac{x^k}{k!}.
\end{equation*}
It follows that for some integer-valued polynomial $Q(x)$
\begin{equation*}
\frac{p^{m}}{m!}\left(\sum_{k=1}^{s-1}(-1)^{k+1}\frac{p^{k-1}}{k}x^k\right)^m=\sum_{k=m}^{
s-1}S_k^{(m)}\frac{\left(p x\right)^k }{k!}
+\frac{p^{m}}{m!}\frac{x^{s}}{d_{s}^m} Q(x),
\end{equation*}
where $d_{s}$ is the least common multiple of all integers less
than $s$ that are relatively prime to $p$.  Taking $x=\alpha p$
and assuming that $m\ge 1$ and $p>2$, we have
\begin{equation*}
\nu_p\left(\frac{\alpha^{s}p^{s+m}}{m!d_{s}^m} Q\left(\alpha
p\right)\right)\ge \nu_p\left(\frac{p^{s+m}}{m!}\right)>
s+m-\frac{m}{p-1}>s.
\end{equation*}
It follows that
\begin{equation*}
\begin{split}
\sum_{j=m}^{s-1}S_j^{(m)}\frac{\alpha^j p^{2j}}{j!}\equiv
\frac{p^m}{m!}\left(\sum_{j=1}^{s-1}\frac{(-1)^{j+1}}{j}\alpha^j
p^{2j-1}\right)^m\left(\mod p^{s+1}\right),
\end{split}
\end{equation*}
and dividing both sides by $p$ and then simplifying yields
\begin{align}
\sum_{j=m}^{s-1}S_j^{(m)}\frac{\alpha^j p^{2j-1}}{j!} &\equiv
\frac{p^{2m-1}}{m!}z^m\left(\mod p^{s}\right)\label{stirling sum
reduction},
\end{align}
where $z$ is defined in Eq. \eqref{z def}.  Substituting Eq.
\eqref{stirling sum reduction} into Eq. \eqref{first sum
intermediate part two} yields
\begin{equation}
\sum_{j=1}^{n}\frac{y^{p^2 j}-y^{p j}}{p j}\equiv
\sum_{m=0}^{s-2}p^{2m+1}\frac{z^{m+1}}{(m+1)!}\left(\sum_{j=1}^{n}j^m
y^{p j}\right)\left(\mod p^s\right).
\end{equation}
Finally if $y\not= 1$  we can substitute equations
\eqref{summation for finite polylog} and \eqref{truncated polylog
summation formula} to complete the proof. $\blacksquare$
\end{proof}
%
%
%
%

\subsection{Summary of computations} \label{computations} \init

In summary, we calculated at least part of $J_p\left(y^p\right)$ for
every prime $p<550$, and for each integer $y\in\{2,3,\dots,p-1\}$.
We then used the relation $J_p\left(y^p\right)=J_p(y)$ to determine
$J_p(y)$.  We also checked Boyd's calculations of $J_p(1)$ with a
version of our program. In particular, we used Theorem \ref{Gn(y^p)
congruence theorem} to determine $J_p\left((1+p)^p\right)$, and then
we verified Boyd's results from the fact that
$J_p\left((1+p)^p\right)=J_p(1)$.  Table $1$ lists the values of
$|J_p(y)|$ for $p<50$, and an extended list for $p<550$ is available
at \url{www.math.ubc.ca/~matrogers/Papers/padic.html}.

    Some interesting observations follow from our computations.
Firstly, $|J_p(y)|$ is small for many values of $y$ and $p$.   For
example, when we considered the possible values of $|J_p(y)|$ for
$p<50$, we found that only $27$ out of $313$ possible cases have
$|J_p(y)|>50$. The three largest sets for $p<50$ are
$|J_{47}(12)|=40608$, $|J_{47}(8)|=27024$, and $|J_{13}(9)|=18763$.
In particular, we have explicitly proven that the congruence
\begin{equation*}
\sum_{j=1}^{n}\frac{9^j}{j}\equiv 0\left(\mod 13\right),
\end{equation*}
has exactly $18763$ solutions.  The first solution occurs at $n=3$,
while the largest solution has $419$ digits and approximately equals
$n\approx 2.385\times 10^{419}$.

We have also calculated that $|J_p(y)|=0$ in $104$ out of $313$
possible cases for $p<50$. This seems to agree with a simple
heuristic suggesting that the density of such $J_p(y)$'s should
approach $1/e\approx .36$.  To see this fact, notice that
$|J_p(y)|=0$ if and only if $G_n(y)\not\equiv 0 \left(\mod p\right)$
for every $n\in \{1,\dots,p-1\}$.  If we assume that the value of
$G_n(y)\left(\mod p\right)$ is randomly distributed whenever
$y\not=1$ (recall that $|J_p(1)|\ge 3$ for any odd prime $p$), then
it is clear that $|J_p(y)|=0$ with probability
$\left(1-1/p\right)^{p-1}$ when $y\not=1$, and probability zero when
$y=1$. Therefore the expected ratio of empty $J_p(y)$'s for $p<n$
equals
\begin{equation*}
\mathbb{E}\left(|J_p(y)|=0 : p< n, 1\le y\le
p-1\right)=\frac{\sum_{p<n}(p-2)(1-1/p)^{p-1}}{\sum_{p<n}(p-1)}\approx
\frac{1}{e},
\end{equation*}
and the expectation approaches $1/e$ as $n\rightarrow \infty$ by
standard analysis. \setlongtables
\begin{longtable}{|c|c|c|c|c|c|c|c|c|c|c|c|c|c|c|c|}
        \hline
        $y,p$ & 2 & 3 & 5 & 7 & 11 & 13 & 17 & 19 & 23 & 29 & 31 & 37 & 41 & 43 & 47\\
        \hline
        \endhead
        \hline
        \caption*{\textbf{Table $1$ }: Values of $|J_p(y)|$ for $p<50$}\\
        \endfoot
        \caption*{\textbf{Table $1$} \textit{(continued)}: Values of $|J_p(y)|$ for $p<50$}\\
        \endlastfoot
        $1$  & 0 & 3 & 3  & 13  & 638   &  3  & 3   &  19  & 3   & 18   & 26   &  15  &  3  & 27   & 11  \\
        $2$  && 0 & 37 & 0 & 0 & 1 & 0 & 9 & 3 & 2 & 1 & 0 & 29 & 0 & 0\\
        $3$  &&& 4 & 4 & 184&0&4&4&0&0&6&140&0&0&0\\
        $4$  &&& 1& 0& 1& 5& 3& 0& 1& 0& 10& 0& 5& 0& 0\\
        $5$  &&&& 12& 4& 1& 0& 1& 0& 1& 0& 0& 0& 1& 34\\
        $6$  &&&&65& 0& 0& 1& 16& 0& 2& 6& 1& 0& 0& 0\\
        $7$  &&&&&0& 0& 8& 4& 4& 0& 1& 0& 0& 129& 0\\
        $8$  &&&&&0& 0& 4& 5& 1& 6& 0& 325& 7& 0& 27024\\
        $9$  &&&&&26& 18763& 1& 25& 0& 1& 0& 1& 0& 4& 6\\
        $10$ &&&&&1& 2& 6& 1& 0& 1& 1225& 27& 2& 0& 1\\
        $11$ &&&&&& 6& 0& 154& 14& 3& 2& 1& 0& 0& 4\\
        $12$ &&&&&& 11& 45& 13& 0& 3& 0& 0& 17& 0& 40608\\
        $13$ &&&&&&& 0& 1& 2& 0& 0& 4& 0& 1& 0 \\
        $14$ &&&&&&& 1& 0& 1& 2& 1& 133& 3& 13& 349\\
        $15$ &&&&&&& 10& 4& 86& 1& 3& 0& 1& 2& 1\\
        $16$ &&&&&&& 65& 61& 0& 0& 0& 5& 24& 39& 0\\
        $17$ &&&&&&&& 6& 0& 0& 0& 0& 0& 2& 3\\
        $18$ &&&&&&&& 13& 0& 5& 59& 8& 3& 2& 2\\
        $19$ &&&&&&&&& 0& 0& 1& 0& 38& 0& 0\\
        $20$ &&&&&&&&& 1& 1& 1& 151& 6& 5& 0\\
        $21$ &&&&&&&&& 2& 8043& 29& 0& 5& 0& 0\\
        $22$ &&&&&&&&& 1& 0& 0& 0& 13& 0& 1\\
        $23$ &&&&&&&&&& 28& 48& 0& 85& 0& 3\\
        $24$ &&&&&&&&&& 0& 24& 233& 3& 20& 92\\
        $25$ &&&&&&&&&& 0& 0& 4& 0& 0& 0\\
        $26$ &&&&&&&&&& 28& 64& 11& 10& 68& 2\\
        $27$ &&&&&&&&&& 6& 38& 1& 3& 28& 5\\
        $28$ &&&&&&&&&& 8& 0& 0& 2& 0& 0\\
        $29$ &&&&&&&&&&& 2& 3& 14& 8& 3\\
        $30$ &&&&&&&&&&& 4& 1& 0& 0& 8\\
        $31$ &&&&&&&&&&&& 0& 5& 9& 2\\
        $32$ &&&&&&&&&&&& 0& 5743& 18& 0\\
        $33$ &&&&&&&&&&&& 4& 1&  1& 1\\
        $34$ &&&&&&&&&&&& 24& 4&  1& 0\\
        $35$ &&&&&&&&&&&& 6& 0&  1& 0\\
        $36$ &&&&&&&&&&&& 34& 22&  8& 3\\
        $37$ &&&&&&&&&&&&&  4&  14& 1\\
        $38$ &&&&&&&&&&&&&  10&  0& 392\\
        $39$ &&&&&&&&&&&&&  22&  1& 3\\
        $40$ &&&&&&&&&&&&&  32&  1& 5\\
        $41$ &&&&&&&&&&&&&&  8& 21\\
        $42$ &&&&&&&&&&&&&&  10198& 2\\
        $43$ &&&&&&&&&&&&&&& 6\\
        $44$ &&&&&&&&&&&&&&& 1\\
        $45$ &&&&&&&&&&&&&&& 5\\
        $46$ &&&&&&&&&&&&&&& 2\\
       \hline
\end{longtable}
 We can examine cases where $y$ is fixed and $p$ varies in somewhat
greater detail.  Following Boyd we will use the notation
\begin{equation*}
G_m(y)=J_p(y)\bigcap \{p^{m-1},p^{m-1}+1,\dots,p^{m}-1\},
\end{equation*}
to denote a level in the tree $J_p(y)$.  Notice that the integers
contained in $G_m(y)$ are precisely the elements of $J_p(y)$ that
we will discover during the $m$'th iteration of Boyd's algorithm.
As usual, $M_p(y)-1$ equals the number of levels in the tree
$J_p(y)$, notice that
\begin{equation*}
J_p(y)=\bigcup_{m=1}^{M_p(y)-1}G_m(y).
\end{equation*}
As an example we will consider the case that occurs when $y=2$.
The following table lists the nonzero values of $|J_p(2)|$ for
$p<550$.

 \setlongtables
\begin{longtable}{|c|c|c|l|}
        \hline
        $p$ & $M_p(2)$ & $|J_p(2)|$& Values of $|G_m(2)|$ for $1\le m< M_p(2)$\\
        \hline
        \endhead
        \hline
        \caption*{\textbf{Table $2$ }: Primes $p<550$ for which
        $|J_p(2)|>0$. 
        }\\
        \endfoot
        \hline
        \caption*{\textbf{Table $2$} \textit{(continued)} : Primes $p<550$ for which
        $|J_p(2)|>0$.
        }\\
        \endlastfoot
        $5$ & $15$ & $37$ & $1,2,3,4,2,3,3,4,3,2,4,1,2,3$\\
        $13$ & $2$ & $1$ & $1$\\
        $19$ & $5$ & $9$ & $1,2,3,3$\\
        $23$ & $3$ & $3$ & $1,2$\\
        $29$ & $3$ & $2$ & $1,1$\\
        $31$ & $2$ & $1$ & $1$\\
        $41$ & $9$ & $29$ & $1,3,3,5,7,7,2,1$\\
        $53$ & $2$ & $2$ & $2$\\
        $59$ & $5$ & $7$ & $2,2,1,2$\\
        $73$ & $6$ & $11$ & $3,2,1,2,3$\\
        $83$ & $3$ & $2$ & $1,1$\\
        $89$ & $24$ & $56$ & $1,1,1,2,4,4,7,6,4,3,2,2,2,2,1,1,1,3,3,3,1,1,1$\\
        $103$ & $3$ & $3$ & $2,1$\\
        $113$ & $50$ & $394$ &  $1,3,4,7,9,10,10,9,9,7,7,11,8,7,6,7,10,12,15,11,9,12,9,7,7,$\\
        &&&$6,11,8,8,11,14,14,14,11,11,12,9,11,6,7,3,7,6,$
        $3,5,3,2,2,3$\\
        $131$ & $18$ & $80$ & $2,3,4,6,2,3,3,4,3,6,3,6,12,11,7,4,1$\\
        $137$ & $6$ & $9$ & $1,2,2,3,1$\\
        $151$ & $3$ & $3$ & $2,1$\\
        $157$ & $11$ & $18$ & $2,1,1,3,1,2,2,3,2,1$\\
        $163$ & $2$ & $1$ & $1$\\
        $167$ & $2$ & $1$ & $1$\\
        $173$ & $3$ & $3$ & $2,1$\\
        $179$ & $2$ & $1$ & $1$\\
        $181$ & $2$ & $1$ & $1$\\
        $193$ & $4$ & $3$ & $1,1,1$\\
        $197$ & $6$ & $7$ & $2,1,1,1,2$\\
        $199$ & $4$ & $3$ & $1,1,1$\\
        $211$ & $12$ & $41$ & $1,2,5,5,5,6,3,6,4,3,1$\\
        $239$ & $3$ & $7$ & $3,4$\\
        $241$ & $6$ & $9$ & $2,2,2,2,1$\\
        $257$ & $9$ & $17$ & $1,1,1,3,3,2,4,2$\\
        $269$ & $7$ & $7$ & $2,1,1,1,1,1$\\
        $271$ & $6$ & $7$ & $1,3,1,1,1$\\
        $293$ & $2$ & $1$ & $1$\\
        $307$ & $4$ & $6$ & $1,3,1,1$\\
        $311$ & $3$ & $3$ & $1,2$\\
        $313$ & $2$ & $2$ & $2$\\
        $317$ & $4$ & $7$ & $3,3,1$\\
        $331$ & $13$ & $55$ & $1,1,2,3,3,6,10,6,9,9,3,2$\\
        $337$ & $20$ & $47$ & $2, 1, 3, 5, 1, 3, 4, 4, 2, 3, 3, 3, 2, 3, 1, 2, 3, 1, 1$\\
        $349$ & $3$ & $4$ & $1,3$\\
        $367$ & $2$ & $1$ & $1$\\
        $373$ & $4$ & $5$ & $2,2,1$\\
        $379$ & $6$ & $19$ & $4,3,6,4,2$\\
        $383$ & $3$ & $2$ & $1,1$\\
        $389$ & $17$ & $51$ &  $1, 2, 2, 2, 2, 5, 7, 8, 6, 6, 3, 2, 1, 1, 1, 2$\\
        $397$ & $13$ & $33$ & $1, 3, 3, 4, 3, 1, 3, 2, 6, 3, 3, 1$\\
        $401$ & $2$ & $1$ & $1$\\
        $419$ & $2$ & $1$ & $1$\\
        $431$ & $12$ & $76$ & $3, 6, 4, 10, 11, 10, 8, 8, 9, 4, 3$\\
        $439$ & $14$ & $26$ & $1, 1, 1, 1, 1, 1, 3, 4, 3, 3, 2, 3, 2$\\
        $449$ & $3$ & $2$ & $1,1$\\
        $457$ & $7$ & $7$ & $1,1,2,1,1,1$\\
        $461$ & $2$ & $1$ & $1$\\
        $463$ & $8$ & $12$ & $1,2,2,1,3,2,1$\\
        $479$ & $2$ & $2$ & $2$\\
        $487$ & $21$ & $52$ & $2, 2, 1, 3, 3, 4, 6, 2, 3, 4, 2, 3, 2, 2, 1, 3, 4, 2, 2, 1$\\
        $499$ & $30$ & $272$ & $1, 6, 6, 6, 6, 11, 9, 11, 10,
 16, 15, 18, 14, 18, 16, 11,$\\
&&&$10, 11, 8, 6, 9, 9, 10, 8,
 7, 5, 8, 5, 2$\\
        $509$ & $5$ & $4$ & $1,1,1,1$\\
        $523$ & $8$ & $16$ & $2,2,2,2,4,3,1$\\
        $547$ & $4$ & $4$ & $1,2,1$\\
        \hline
\end{longtable}
 Finally, we will point out that it is usually easy to calculate $J_{p^s}\left(y^p\right)$
after first determining $J_p(y)$.  Since we will have already
calculated the value of $G_n\left(y^p\right)\left(\mod
p^{s'}\right)$ for some $s'\gg 1$, we can simply check whether or
not $G_n\left(y^p\right)\equiv 0\left(\mod p^j\right)$ for every
$n\in J_p(y)$ and for any $j<s'$. In practice this check rarely
requires new computations, since generally we will have used a value
of $s'$ much larger than the order of vanishing of
$G_n\left(y^p\right)$ modulo $p$. As an example we proved that
\begin{align*}
 J_5(2)=\{&{3}, {17, 19}, {86, 97, 99}, {485, 488, 497, 499},
2486, 2496,12431, \\
& 12482, 12484, 62157, 62159, 62421,
 {310787, 310789, 312107},\\
 &312109,{1553936, 1560537, 1560539},
 {7802685, 7802688},\\
 & {39013425, 39013428, 39013442, 39013444}, 195067126,\\
 &{975335630, 975335633}, {4876678152, 4876678154, 4876678166}\},
\end{align*}
and with minimal extra computations we also determined that
\begin{align*}
J_{25}(7)=&\{ 3,19,499,2486,12431,312107\},\\
J_{125}(32)=&\emptyset.
\end{align*}
Notice that $J_{5^{s}}(32)=\emptyset$ when $s\ge 3$, since in those
cases $J_{5^s}(32)\subset J_{125}(32)=\emptyset$.

Unfortunately this procedure is unsuitable for calculating the
majority of values of $J_{p^{s}}(x)$ when $s>1$. Although we can
easily calculate $J_{p^s}\left(y^p\right)$, the method only applies
to $J_{p^{s}}(x)$ when $x\equiv y^p \left(\mod p^s \right)$ for some
$y$.  For example, if $x\in\mathbb{Z}_{25}^{*}\setminus
\{1,7,18,24\}$, then we have to settle for the weak conclusion that
$J_{25}(x)\subset J_{5}(x)$. While Theorem \ref{Gn(y^p) congruence
theorem} probably only requires minor modifications to extend the
computations, we will not address that problem here.

\medskip
\noindent\textbf{Open Problem :} Calculate $J_{p^{s}}(x)$ when $s>1$
and $x\not\equiv y^p\left(\mod p^s\right)$ for any $y$.
\medskip

\section{Conclusion}
\label{conclusion} \init
Although we primarily restricted our attention to computational
problems in this paper, it would be desirable to construct
probabilistic models to explain the behavior of $|J_p(y)|$.  Boyd
constructed such models to explain the behavior of $|J_p(1)|$, and
it seems likely that his ideas will suffice to explain our results
as well. Notice that after combining our calculations with Boyd's,
we have proved that $|J_p(y)|<\infty$ in $98.6 \%$ of cases for
primes $p<550$. In fact, it would be desirable to find a general
proof of the following conjecture:

\medskip
\noindent\textbf{Conjecture 1 :} We will conjecture that
$|J_p(y)|<\infty$ whenever $\nu_p(y)=0$.
\medskip

Finally, we will conclude the paper with an example of a more
complicated type of function that we can easily study with our
method. Consider the function $f_n$ defined by
\begin{equation*}
f_n:=\sum_{j=1}^{n}\frac{F_j}{j},
\end{equation*}
where the $F_j$'s are Fibonacci numbers.  Recall that we can
either calculate $F_j$ recursively, or with Binet's formula:
\begin{equation*}
F_j=\frac{\left(1+\sqrt{5}\right)^j-\left(1-\sqrt{5}\right)^j}{2^j
\sqrt{5}}.
\end{equation*}

    It is not difficult to prove that the solutions of $f_n\equiv 0\left(\mod
p\right)$ are arranged in a tree.  The crucial fact for proving
this claim is that the Fibonacci numbers satisfy the congruence
\begin{equation}\label{fibonacci congruence}
F_{p^s j}\equiv \left(\frac{p}{5}\right)F_{p^{s-1}j} \left(\mod
p^s\right),
\end{equation}
for all $s$ and $j$, with $\left(\frac{*}{*}\right)$ denoting the
Legendre symbol.  This congruence easily implies that
$f^{(1)}_n\equiv \left(\frac{p}{5}\right)f_n\left(\mod p\right)$,
where
\begin{equation*}
f^{(1)}_n=\sum_{j=1}^{n}\frac{F_{p j}}{j}.
\end{equation*}
Using properties of the Fibonacci numbers we can prove that
$f^{(1)}_n$ obeys the congruence
\begin{equation}\label{p adic fibonacci recurrence}
f^{(1)}_{p n+j}\equiv
\left(\frac{p}{5}\right)\frac{f^{(1)}_n}{p}+\left(\frac{p}{5}\right)^2
F_{n}\sum_{k=1}^{j}\frac{F_{k+1}}{k}+\left(\frac{p}{5}\right)^2\left(F_{n+1}-1\right)\sum_{k=1}^{p-1}\frac{F_{k+1}}{k}
\left(\mod p\right),
\end{equation}
and therefore our claim about the distribution of zeros of $f_n
\left(\mod p\right)$ follows immediately.  Based on cursory
computations it also seems reasonable to make the following
conjecture:

\medskip
\noindent\textbf{Conjecture 2 :} For all $n$ we have $f_{4n}\equiv 0
\left(\mod 5\right)$.  Furthermore, if $p\not =5$:
\begin{equation*}
\left|\left\{n : f_n\equiv 0 \left(\mod
p\right)\right\}\right|<\infty.
\end{equation*}

From these short computations, it seems obvious that our method will
extend to functions involving integer sequences other than just the
Fibonacci numbers.  We will speculate that many functions of the
form
\begin{equation*}
\sum_{j=1}^{n}\frac{T_j}{j}
\end{equation*}
should obey $p$-adic recurrences analogous to equations
\eqref{Gn(y^p) congruence theorem} or \eqref{p adic fibonacci
recurrence}, provided that $T_j$ satisfies a second-degree linear
recurrence. We are not presently prepared to speculate on the
behavior of such functions when the $T_j$'s satisfy higher order
recurrences, as we failed to observe any interesting patterns modulo
$p$ when the $T_j$'s equal Tribonacci numbers.


\section{Appendix : A simple but important sum}
\label{appendix} \init

Virtually all of the calculations in this paper depended upon our
ability to efficiently calculate the simple sum
\begin{equation*}
\sum_{j=1}^{n}j^r x^j
\end{equation*}
for extremely large, \textit{but finite}, values of $n$.  It was
therefore imperative to eliminate the $n$-dependency from the
index of summation. While many obvious formulas exist for this
sum, including
\begin{equation*}
\sum_{j=1}^{n}j^r x^j=\left(x\frac{\d}{\d x}\right)^r
\left(\frac{x(1-x^{n})}{1-x}\right),
\end{equation*}
we chose to avoid recursive identities, and to find a closed form
instead. Perhaps the crucial observation was that when $n=\infty$:
\begin{equation}\label{summation for finite polylog}
\Li_{-r}(x):=\sum_{j=1}^{\infty}j^r
x^j=-\delta_{r0}+\sum_{j=0}^{r}j!\mathfrak{S}_{r}^{(j)}\frac{x^j}{(1-x)^{j+1}}.
\end{equation}
As usual $\delta_{r0}$ is the Kronecker delta, and
$\mathfrak{S}_r^{(j)}$ denotes the Stirling numbers of the second
kind \cite{Gr}. Therefore, briefly assuming that $|x|<1$, we obtain
\begin{equation*}
\sum_{j=1}^{n}j^r x^j=\sum_{j=1}^{\infty}j^r x^j
-\sum_{j=1}^{\infty}(n+j)^r x^{n+j},
\end{equation*}
and expanding $(n+j)^r$ with the binomial formula yields
\begin{equation}\label{truncated polylog summation formula}
\sum_{j=1}^{n}j^r x^j=\Li_{-r}(x)-x^n\sum_{m=0}^{r}{r\choose m}n^m
\Li_{-(r-m)}(x).
\end{equation}
Since the left-hand side of this last identity is a polynomial, the
principle of analytic continuation shows that the identity holds for
all $x$. In practice we should only use Eq. \eqref{truncated polylog
summation formula} if $x\not =1$, since in that case we can use Eq.
\eqref{summation for finite polylog} to calculate $\Li_{-r}(x)$.
When $x=1$ we can calculate the left-hand side of Eq.
\eqref{truncated polylog summation formula} by simply reverting to
Bernoulli's classical formula for power sums.
%
%
\bigskip

\begin{acknowledgements}

The author would like to thank David Boyd for the many useful
discussions, and for his kind encouragement.
\end{acknowledgements}

\end{document}